\newtheorem{thm}{Theorem}[section]
\newtheorem{lem}[thm]{Lemma}
\theoremstyle{definition}
\newtheorem{defn}[thm]{Definition}
\newtheorem{rem}[thm]{Remark}
\begin{document}
	\title{ Embedding hypercubes into torus and
		Cartesian product of paths and cycles 
		for minimizing wirelength }

	\author{Zhiyi Tang}
	\address{
	School of Mathematics and Physics, Hubei Polytechnic University, Huangshi 435003, PR China.}
	\email{tangzhiyi@hbpu.edu.cn}
	
\begin{abstract}

Though embedding problems have been considered for several regular graphs\cite{BCHRS1998,AS2015,ALDS2021},
%such as hypercube into grid\cite{PM2009}, %binary tree into grid\cite{OS2000}, 
%honeycomb networks into hypercubes\cite{DWNS04}, et al, 
it is still an open problem for hypercube into torus\cite{PM2011,AS2015}.
In the paper, we prove the conjecture mathematically and 
obtain the minimum wirelength of embedding for hypercube into Cartesian product of paths and/or cycles.
In addition, we explain that Gray code embedding  is an optimal strategy in such embedding problems.
		
{\bf{Key words:}}\  embedding wirelength; hypercube; torus; Cartesian product; Gray code embedding. 
		
\end{abstract}
	
\maketitle	
	
\section{Introduction}

%Graph embedding is important in parrallel algorithm, parrallel computer, or multiprocessor systems.
Task mapping in modern high performance parallel computers can be modeled as a graph embedding problem.
Let $G(V,E)$ be a simple and connected graph with vertex set $V(G)$ and edge set $E(G)$.
Graph embedding\cite{BCHRS1998,AS2015,ALDS2021} is an
ordered pair $<f,P_f>$ of injective mapping between the guest graph $G$ and  the host graph $H$ such that
\begin{itemize}
	\item[(i)] $f:V(G)\rightarrow V(H)$, and
	\item[(ii)] $P_f: E(G)\rightarrow$
	$\{P_f(u,v):$ $P_f(u,v)$ is a path in $H$ between $f(u)$ and $f(v)$ for $\{u,v\}\in E(G)\}$.
\end{itemize}

It is known that the topology mapping problem is NP-complete\cite{HS2011}.
Since Harper\cite{H1964} in 1964 and Bernstein\cite{Bernstein1967} in 1967, a series of  embedding problems have been studied\cite{E1991,OS2000,DWNS04,FJ2007,LSAD2021}.
The quality of an embedding can be measured by certain cost criteria.
One of these criteria is the wirelength.
Let $WL(G,H;f)$ denote the wirelength of $G$ into $H$ under the embedding $f$.
Taking over all embeddings $f$, the minimum wirelength of $G$ into $H$ is defined as
$$WL(G,H)=\min\limits_{f} WL(G,H;f).$$

Hypercube is one of the most popular, versatile and efficient topological structures of interconnection networks\cite{Xu2001}. 
More and more studies related to hypbercubes have been performed\cite{Chen1988,PM2009,PM2011,RARM2012}.
Manuel\cite{PM2011} et al. computated the minimum wirelength of embedding hypercube into a simple cylinder. In that paper, the wirelenth for hypercube into general cylinder and torus were given as  conjectures. 
Though Rajan et al.\cite{RRPR2014} and Arockiaraj et al.\cite{AS2015} studied those embedding problems, the two conjectures are still open.
We recently gave  rigorous proofs of  hypercubes into cycles\cite{LT2021} and cylinders (the first conjecture)\cite{Tang2022} successively. 
Using those techniques and process, %followed in the previous works, 
we try to settle the last conjecture for torus. 
In the paper, we also generaliz the results to other Cartesian product of  paths and/or cycles.

It is seen that the grid, cylinder and torus are Cartesian product of graphs. In the past, the vertices of those graphs are labeled by a series of nature numbers\cite{PM2009,PM2011,AS2015,Tang2022}. 
But it is not convenient for some higher dimensional graphs.
To describe a certain embedding efficiently,
we apply tuples to lable the vertices in the paper.
By the tool of Edge Isoperimetric Problem(EIP)\cite{H2004}, we estimate and explain the minimal wirelength for hypercube into 
torus and other Cartesian product of graphs. 

\noindent\textbf{Notation.} 
For $n\ge 1$, 
we define $Q_n$ to be the hypercube with vertex-set $\{0,1\}^n$, 
where two $0-1$ vectors are adjacent if they differ in exactly one coordinate \cite{RR2022}.

\noindent\textbf{Notation.}
 An $r_1\times r_2$ grid with $r_1$ rows and $r_2$ colums is represented by $P_{r_1}\times P_{r_2}$ where the rows are labeled $1,2,\ldots, r_1$ and 
the columns are labeled $1,2,\ldots, r_2$ \cite{PM2009}.
The torus $C_{r_1}\times C_{r_2}$ is a $P_{r_1}\times P_{r_2}$ with a wraparound edge in each column and a wrapround edge in each row.
	
\textbf{Main Results}
\begin{thm}\label{ccthm}
	For any  $n_1,\ n_2\ge2,\ n_1+ n_2=n$.
	The minimum wirelength  of hypercubes into torus is
		\begin{equation*}\label{cpwl}
	WL(Q_n,C_{2^{n_1}}\times C_{2^{n_2}})=
	2^{n_2}(3\cdot 2^{2n_1-3}-2^{n_1-1})+
	2^{n_1}(3\cdot 2^{2n_2-3}-2^{n_2-1}).
		\end{equation*}	
	Moreover, Gray code embedding is an optimal embedding.	
\end{thm}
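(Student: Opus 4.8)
The plan is to combine a lower bound coming from the Edge Isoperimetric Problem (EIP) on the hypercube with a matching upper bound realized by an explicit Gray code embedding. I would first recall the standard reduction: for any embedding $f$ of $Q_n$ into the host $H=C_{2^{n_1}}\times C_{2^{n_2}}$, the wirelength decomposes as a sum over the edge cuts of $H$. Concretely, if $\{S, V(H)\setminus S\}$ is an edge cut of $H$ induced by removing a set of host-edges (for a torus, a natural family is: fix a ``vertical'' cut separating the columns into two arcs, or a ``horizontal'' cut separating the rows into two arcs), then each guest-edge of $Q_n$ whose image path crosses that cut contributes at least $1$ to the wirelength through that cut. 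Summing over a well-chosen collection of cuts of $H$ that covers every host-edge the right number of times, one gets $WL(Q_n,H)\ge \sum_{\text{cuts}} |E_{Q_n}(f^{-1}(S), f^{-1}(V(H)\setminus S))|$, and since $|f^{-1}(S)|$ is a subset of $Q_n$ of a prescribed size, each term is bounded below by the EIP optimum $\theta_{Q_n}(|S|)$, the minimum number of edges leaving a set of that size in $Q_n$. This is exactly the technique used in the author's earlier papers on cycles \cite{LT2021} and cylinders \cite{Tang2022}, so I would lean on those.

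Next I would set up the bookkeeping for the torus. Because $C_{2^{n_1}}\times C_{2^{n_2}}$ is edge-transitive in each of the two directions, the relevant cuts split into two types. For the ``column'' direction: for each $1\le i\le 2^{n_1}$ one can cut between column $i$ and column $i+1$ (indices mod $2^{n_1}$), but since the cycle is even one should really use the pair of antipodal cuts that together separate the torus into two half-tori; the subset $S$ then has size $2^{n_2}\cdot 2^{n_1-1} = 2^{n-1}$. Using Harper's theorem, $\theta_{Q_n}(2^{n-1}) = 2^{n-1}$ (a subcube-like half gives that), and more generally $\theta_{Q_n}(k\cdot 2^{n_2})$ for $1\le k\le 2^{n_1-1}$ is computed from the first $k\cdot 2^{n_2}$ vertices in the binary (lexicographic / Hales–Jewett) order. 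Summing the EIP values over all such nested cuts in one direction, and then over both directions, should produce precisely the closed form $2^{n_2}(3\cdot 2^{2n_1-3}-2^{n_1-1}) + 2^{n_1}(3\cdot 2^{2n_2-3}-2^{n_2-1})$. The arithmetic here is the routine part: it amounts to summing a formula like $\sum_{k=1}^{2^{n_1-1}} \theta_{Q_n}(k2^{n_2})$ which telescopes once one knows that the optimal hypercube sets of size $k2^{n_2}$ are ``stacked $Q_{n_2}$-subcubes'' so each increment adds a controlled number of boundary edges.

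For the upper bound I would exhibit the Gray code embedding explicitly in the tuple notation introduced in the excerpt: write a vertex of $Q_n$ as $(x,y)$ with $x\in\{0,1\}^{n_1}$, $y\in\{0,1\}^{n_2}$, and map it to the torus vertex $(g_{n_1}(x), g_{n_2}(y))$ where $g_m:\{0,1\}^m\to \mathbb{Z}_{2^m}$ is the inverse of the reflected binary Gray code, with each guest-edge routed along a shortest path in the corresponding cycle factor. A one-bit change in the $x$-block moves to a Gray-code neighbor in $C_{2^{n_1}}$ (distance $1$ in that factor, except the single ``long'' wraparound step), and similarly for $y$; so the wirelength of this embedding is computed factor-by-factor, and one checks it equals the claimed value — this reuses the cycle computation from \cite{LT2021} applied in each direction, multiplied by the size of the other factor. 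The main obstacle, as in the cylinder case, is the lower bound: one must argue that the family of torus cuts chosen is simultaneously \emph{tight} for the Gray code embedding (so that the EIP bound is not lossy) and that the nested optimal hypercube subsets of sizes $k\cdot 2^{n_2}$ (resp. $k\cdot 2^{n_1}$) can be realized by a \emph{single} embedding across all cuts at once — i.e. the ``compression''/nestedness property of Harper-optimal sets is compatible with the product structure of the host. Verifying this compatibility, and handling the wraparound edges of the cycles correctly (the torus has no boundary, so each direction contributes two families of cuts rather than one, doubling-counting issues must be tracked), is where the real work lies; everything else is the summation identity and the explicit Gray code verification.
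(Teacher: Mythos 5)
Your overall architecture (partition the torus edges into cuts, apply the congestion lemma, lower-bound each family of cuts, match with a Gray code upper bound) is the same as the paper's, and your description of the cuts and of the Gray code upper bound is essentially right. But there is a genuine gap in your lower bound, at exactly the step you wave at with ``each term is bounded below by the EIP optimum $\theta_{Q_n}(|S|)$.'' In the torus both factors are cycles, so \emph{every} cut in a given direction induces a vertex set of the same size $2^{n_1-1}\cdot 2^{n_2}=2^{n-1}$ (a sliding window $F_j^{n_1}\times N_{2^{n_2}}$ of $2^{n_1-1}$ consecutive columns); there are no nested sets of sizes $k\cdot 2^{n_2}$ here — that bookkeeping belongs to the path/grid case and does not apply. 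Harper's theorem gives $\theta_{Q_n}(2^{n-1})=2^{n-1}$ (attained by a subcube), so the term-by-term bound yields only $2^{n_1-1}\cdot 2^{n-1}=2^{n+n_1-2}$ for the first direction, whereas the claimed value is $2^{n_2}(3\cdot 2^{2n_1-3}-2^{n_1-1})=3\cdot 2^{n+n_1-3}-2^{n-1}$, which is strictly larger for every $n_1\ge 3$. So summing individual EIP optima cannot ``produce precisely the closed form''; the bound is lossy because no single embedding can make all $2^{n_1-1}$ overlapping half-windows simultaneously Harper-optimal.

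The missing ingredient is a lower bound on the \emph{sum} $\sum_{j=1}^{2^{n_1-1}}\theta\bigl(n,f^{-1}(F_j^{n_1}\times N_{2^{n_2}})\bigr)$ over all embeddings $f$ at once, and this is not a compression/nestedness compatibility issue as you suggest (nestedness is the obstacle for increasing set sizes, i.e.\ path factors). It is precisely the content of Theorem 5.2 of \cite{Tang2022} (itself resting on the circular-wirelength result of \cite{LT2021}), which the paper imports as inequality (\ref{cp1+2}a) and then transfers to the second cycle direction via the coordinate-swap Lemma \ref{swap}. Your proposal never identifies this sum-level inequality, and without it the lower bound does not close. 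The rest — the partition into $\mathscr{P}_{1j},\mathscr{P}_{2j}$, the congestion-lemma decomposition \eqref{ccsum}, and the factor-by-factor Gray code computation — is consistent with the paper.
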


\noindent\textbf{Notation.} 
Cartesian product of paths and/or cycles  is denoted by 
$\mathscr{G}=\mathscr{G}_1\times \mathscr{G}_2\times \cdots \times \mathscr{G}_k,$
where $\mathscr{G}_i \in \{P_{2^{n_i}}, C_{2^{n_i}}\},  1\le i \le k$.	

\begin{thm}\label{carthm}
For any $k>0$, $ n_i\ge2$, and  $\sum_{i=1}^{k}n_i=n$.
The minimum wirelength  of hypercubes into Cartesian product $\mathscr{G}$ is
\begin{equation*}
	WL(Q_n,\mathscr{G})=\sum_{i=1}^{k}\mathscr{L}_i,
\end{equation*}
where	\begin{equation*}
	\mathscr{L}_i=\left\{
	\begin{array}{rcl}
		&2^{n-n_i}(3\cdot 2^{2n_i-3}-2^{n_i-1}),&\mbox{if}\ \ \mathscr{G}_i=C_{2^{n_i}},\\
		&2^{n-n_i}(2^{2n_i-1}-2^{n_i-1}),&\mbox{if}\ \ \mathscr{G}_i=P_{2^{n_i}}.
	\end{array}
	\right.
\end{equation*}
Moreover, Gray code embedding is an optimal embedding.
\end{thm}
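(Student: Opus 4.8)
The plan is to exploit the Cartesian--product structure on both sides. Since $\sum_i n_i=n$ we can write $Q_n=Q_{n_1}\times\cdots\times Q_{n_k}$, and the distance in a Cartesian product is the sum of the factor distances; so for every admissible $f:V(Q_n)\to V(\mathscr{G})$, writing $f_i$ for the $i$-th coordinate of $f$,
\[
WL(Q_n,\mathscr{G};f)=\sum_{\{u,v\}\in E(Q_n)}d_{\mathscr{G}}\bigl(f(u),f(v)\bigr)=\sum_{i=1}^{k}T_i(f),\qquad
T_i(f):=\sum_{\{u,v\}\in E(Q_n)}d_{\mathscr{G}_i}\bigl(f_i(u),f_i(v)\bigr).
\]
Because $|V(\mathscr{G})|=2^n=|V(Q_n)|$, every admissible $f$ is bijective and every fibre $f_i^{-1}(x)$ has exactly $2^{n-n_i}$ vertices; this is the only property of $f$ the lower bound will use. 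It therefore suffices to prove (A) $T_i(f)\ge\mathscr{L}_i$ for every admissible $f$ and every $i$, and (B) that the Gray code embedding attains $T_i(f)=\mathscr{L}_i$ for each $i$; then $WL(Q_n,\mathscr{G})=\sum_i\mathscr{L}_i$ and Gray code is optimal.

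For a path factor, $d_{P_m}(a,b)=\sum_{j=1}^{m-1}\bigl|\mathbf{1}_{\{a\le j\}}-\mathbf{1}_{\{b\le j\}}\bigr|$, so $T_i(f)=\sum_{j=1}^{2^{n_i}-1}\bigl|\partial_{Q_n}(A_j)\bigr|$ where $A_j:=f_i^{-1}(\{1,\dots,j\})$ is a nested chain with $|A_j|=j\,2^{n-n_i}$ and $\partial$ denotes the edge boundary. The key input is the Edge-Isoperimetric Problem for the cube \cite{H2004}: $|\partial_{Q_n}(A)|\ge\theta_n(|A|)$, with the extremal sets being lexicographic initial segments. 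Taking the $n_i$ coordinates of block $i$ as the leading bits, the lexicographic initial segment of $Q_n$ of size $j\,2^{n-n_i}$ is $B\times Q_{n-n_i}$ with $B$ the lexicographic initial segment of $Q_{n_i}$ of size $j$; this yields the scaling identity $\theta_n(j\,2^{n-n_i})=2^{n-n_i}\theta_{n_i}(j)$. Summing and inserting the known value $\sum_{j=1}^{2^{n_i}-1}\theta_{n_i}(j)=2^{2n_i-1}-2^{n_i-1}$ gives $T_i(f)\ge 2^{n-n_i}(2^{2n_i-1}-2^{n_i-1})=\mathscr{L}_i$.

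For a cycle factor the cuts are arc-pairs $\{e_p,e_q\}$: an arc of length $r$ has preimage of size $r\,2^{n-n_i}$, hence edge boundary $\ge\theta_n(r\,2^{n-n_i})=2^{n-n_i}\theta_{n_i}(r)$. Using only antipodal pairs ($r=2^{n_i-1}$) gives merely $T_i(f)\ge 2^{\,n+n_i-2}$, which for $n_i\ge3$ is strictly smaller than $\mathscr{L}_i$ --- a generic $2^{n-1}$-subset of $Q_n$ has edge boundary well above $2^{n-1}$ --- so one must run the full optimisation over arc-lengths, exactly as in the cycle case \cite{LT2021} that underlies Theorem~\ref{ccthm}. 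The observation that makes this work is that every arc-preimage has size divisible by $2^{n-n_i}$ and the EIP profile rescales by precisely $2^{n-n_i}$, so the optimisation is the $Q_{n_i}\to C_{2^{n_i}}$ one with every term multiplied by $2^{n-n_i}$, giving $T_i(f)\ge 2^{n-n_i}\,WL(Q_{n_i},C_{2^{n_i}})=2^{n-n_i}(3\cdot2^{2n_i-3}-2^{n_i-1})=\mathscr{L}_i$. I expect this to be the main obstacle: one has to check that the fibres of the non-injective coordinate map $f_i$ can be ``folded'' into an effective bijective embedding of $Q_{n_i}$ --- i.e.\ that the EIP of $Q_n$ factorises at the relevant sizes as above --- and then that the combinatorial optimisation of \cite{LT2021}/Theorem~\ref{ccthm} is unaffected by this uniform rescaling.

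For (B) I would take $f=g_1\times\cdots\times g_k$ with $g_i$ the Gray code embedding of $Q_{n_i}$ into $\mathscr{G}_i$. An edge of $Q_n$ flips one bit, lying in a unique block $i$, and maps to an edge of $\mathscr{G}$ in direction $i$; fixing the remaining coordinates in any of the $2^{n-n_i}$ ways, these edges form a copy of $Q_{n_i}$ embedded by $g_i$ into a copy of $\mathscr{G}_i$. Hence $T_i(f)=2^{n-n_i}\,WL(Q_{n_i},\mathscr{G}_i;g_i)$, which equals $\mathscr{L}_i$ by the single-factor evaluations in \cite{LT2021} (for the cycle) and \cite{Tang2022}, the path value $2^{2n_i-1}-2^{n_i-1}$ being a short direct sum. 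Summing over $i$ gives $WL(Q_n,\mathscr{G})\le\sum_i\mathscr{L}_i$, which meets the lower bound; all inequalities are then equalities, proving both the formula and the optimality of Gray code embedding.
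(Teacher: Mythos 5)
Your overall strategy --- split the wirelength into per-factor contributions $T_i(f)$ and reduce each factor to a known one- or two-factor result --- is essentially the paper's strategy; the paper realizes the same decomposition through an explicit edge-cut partition $\{\mathscr{P}_{ij}\}$ and Lemma \ref{wl} rather than through additivity of Cartesian-product distances, but the resulting sum is exactly \eqref{wlG}. Your treatment of the path factors is complete and in fact slightly sharper than the paper's: the scaling identity $\theta_n(j\,2^{n-n_i})=2^{n-n_i}\theta_{n_i}(j)$ for lexicographic initial segments gives a term-by-term bound on each cut, whereas the paper only invokes the summed inequality (\ref{cp1+2}b) imported from \cite{Tang2022}. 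The Gray-code upper bound in your part (B) is also correct and matches Lemmas \ref{BWL1} and \ref{BWL2}.

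The gap is exactly where you predicted it: the lower bound for a cycle factor. As you correctly compute, bounding each diametral-cut preimage $f_i^{-1}(F_j^{n_i})$ separately by $\theta_n(2^{n-1})=2^{n-1}$ yields only $2^{n+n_i-2}$, which is strictly below $\mathscr{L}_i$ for $n_i\ge 3$; the needed inequality concerns all $2^{n_i-1}$ overlapping arc-preimages simultaneously, and it is \emph{not} obtained by rescaling the EIP profile, because these sets are constrained to be unions of consecutive fibres of $f_i$ and those fibres are arbitrary $2^{n-n_i}$-subsets of $Q_n$ with no product structure. You flag this as ``the main obstacle'' and describe a hoped-for ``folding'' of fibres into an effective embedding of $Q_{n_i}$, but you do not carry it out, so the cycle case is not proved. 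The paper closes this with a one-line device: compose $f$ with the bijection $f_1$ that keeps the $i$-th coordinate and encodes the remaining $k-1$ coordinates in mixed radix as a single element of $N_{2^{n-n_i}}$. Then $f_1\circ f$ is an arbitrary bijection of $\{0,1\}^n$ onto the vertex set of the cylinder $C_{2^{n_i}}\times P_{2^{n-n_i}}$, each arc-preimage becomes $(f_1\circ f)^{-1}(F_j^{n_i}\times N_{2^{n-n_i}})$, and the summed inequality (\ref{cp1+2}a) (Theorem 5.2 of \cite{Tang2022}) applies verbatim --- no folding onto $Q_{n_i}$ is needed, since the heavy combinatorial work is already contained in the two-factor cylinder theorem and the $k$-factor case reduces to it by relabelling. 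With that substitution (which also streamlines your path case) your argument becomes a complete proof along the paper's lines.
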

	
The paper is organized as follows. 
In Section \ref{Preliminaries}, some definitions and elementary properties are introduced. 
In Section \ref{sec:torus}, we explain the Gray code embedding is an optimal strategy for hypercube into torus. 
Section \ref{sec:cartesian} is devoted to  Cartesian products of paths and/or cycles. 
%Section 5 contains a summary.

\section{Preliminaries}	\label{Preliminaries}
EIP has been used as  a powful tool in the computation of minimum wirelength of graph embedding\cite{H2004}.
 EIP is to determine a subset $S$ of cardinality $k$  of a  graph $G$ such that the edge cut separating this subset from its complement has minimal size. Mathematically, Harper denotes
	$$\Theta(S)=\{ \{u,v\}\in E(G)\ : u\in S, v\notin S  \}.$$ 
%	Generally, for a given $k$, it is difficult to find a subset $S$ such that $|\Theta(S)|$ is the minimal.
	%Harper solved EIP when $G$ is an $n-$dimensional hypercube  $Q_n$.
	For any $S\subset V(Q_n)$, use $\Theta(n,S)$ in place of $\Theta(S)$ and let $|\Theta(n,S)|$ be $\theta(n,S)$. 
\begin{lem}\label{swap}
	Take a subcube $Q_{n_1}$ of $Q_n$, and $S_1\subset V(Q_{n_1}), S_2\subset V(Q_{n-n_1})$, then
	\begin{equation*}
		\theta(n,S_1\times S_2)=\theta(n,S_2 \times S_1).
	\end{equation*}
\end{lem}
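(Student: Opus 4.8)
\noindent\emph{Proof strategy.} The plan is to deduce the identity from a symmetry of the hypercube rather than from any computation: I would produce an automorphism of $Q_n$ that sends the set $S_1\times S_2$ to the set $S_2\times S_1$, and then invoke the fact that the cardinality of an edge cut $\Theta(\cdot)$ is invariant under graph automorphisms.

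First I would fix the identification under which the statement is phrased, namely $V(Q_n)=V(Q_{n_1})\times V(Q_{n-n_1})$ with the chosen subcube $Q_{n_1}$ spanned by the first $n_1$ coordinates, so that a vertex of $Q_n$ is a string $x=(x',x'')$ with $x'\in\{0,1\}^{n_1}$ and $x''\in\{0,1\}^{n-n_1}$, and $S_1\times S_2=\{(x',x''):x'\in S_1,\ x''\in S_2\}$. Next I would observe that any permutation $\pi$ of the $n$ coordinate positions induces an automorphism of $Q_n$, since two vertices are adjacent precisely when they differ in exactly one position and this property is preserved by $\pi$. Applying this to the ``block interchange'' permutation that moves the last $n-n_1$ positions in front of the first $n_1$ positions gives an automorphism $\sigma$ of $Q_n$ with $\sigma(x',x'')=(x'',x')$; it visibly maps $S_1\times S_2$ bijectively onto $S_2\times S_1$, hence also maps the complement onto the complement, and therefore restricts to a bijection between the edge cut of $S_1\times S_2$ and that of $S_2\times S_1$. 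This yields $\theta(n,S_1\times S_2)=\theta(n,S_2\times S_1)$.

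As an alternative, and as a sanity check, I would note the explicit decomposition
\[
\theta(n,S_1\times S_2)=|S_2|\,\theta(n_1,S_1)+|S_1|\,\theta(n-n_1,S_2),
\]
which follows because every edge of $Q_n$ flips a single coordinate and hence stays inside one ``copy'' of $Q_{n_1}$ (fixed $x''$) or one ``copy'' of $Q_{n-n_1}$ (fixed $x'$); the right-hand side is manifestly symmetric in the two factors, giving the lemma again.

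The argument has essentially no hard step; the only points requiring a little care are spelling out the product identification of $Q_n$ with $V(Q_{n_1})\times V(Q_{n-n_1})$ (and, if one wants full generality, checking that an arbitrary subcube of $Q_n$ is carried to the coordinate one by an automorphism, so that no generality is lost), and verifying that hypercube edges respect the block decomposition. I expect the write-up to be only a few lines.
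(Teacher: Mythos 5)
Your proposal is correct and its main argument (the coordinate-block-swap automorphism of $Q_n$ carrying $S_1\times S_2$ to $S_2\times S_1$ and inducing a bijection of edge cuts) is exactly a rigorous version of the paper's one-line proof, which appeals to the same edge-by-edge correspondence "by the definition of $Q_n$." The explicit formula $\theta(n,S_1\times S_2)=|S_2|\,\theta(n_1,S_1)+|S_1|\,\theta(n-n_1,S_2)$ is a correct and useful cross-check but not needed.
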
	
\begin{proof}
	By the definition of hypercube $Q_n$, 
	there is an edge  connected in $S_1\times S_2$ if and only if
	there is an edge connected in $S_2\times S_1$.
\end{proof}

The following lemma is efficient technique to find the exact wirelength. 

\begin{lem}\cite{Tang2022}\label{wl}
	Let $f$ be an embedding of $Q_n$ into $H$. 
	Let $(L_i)_{i=1}^{m}$ be a partition of $E(H)$.  
	For each $1\le i \le m$, $(L_i)_{i=1}^{m}$ satisfies:
	\begin{itemize}
		\item[\bf{\normalfont (A1)}]$L_i$ is an edge cut of $H$ such that $L_i$ disconnects $H$ into two components and one of induduced vertex sets is denoted by $l_i$;
		\item[\bf{\normalfont (A2)}]$|P_f(u,v)\cap L_i|$ is one if $\{u,v\}\in \Theta(n,f^{-1}(l_i))$ and zero otherwise for any $\{u,v\}\in E(Q_n)$.
	\end{itemize}
	Then $$WL(Q_n,H;f)=\sum_{i=1}^{m}\theta(n,f^{-1}(l_i)).$$
\end{lem}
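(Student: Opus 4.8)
The plan is to unwind the definition of wirelength directly and then reorganize the resulting double sum using the hypothesis that $(L_i)_{i=1}^{m}$ partitions $E(H)$. By definition, $WL(Q_n,H;f)=\sum_{\{u,v\}\in E(Q_n)}|P_f(u,v)|$, where $|P_f(u,v)|$ denotes the number of edges of $H$ on the path $P_f(u,v)$. Since each edge of $H$ belongs to exactly one block $L_i$, the length of any path decomposes as $|P_f(u,v)|=\sum_{i=1}^{m}|P_f(u,v)\cap L_i|$. Substituting this and interchanging the two finite sums gives
$$WL(Q_n,H;f)=\sum_{i=1}^{m}\ \sum_{\{u,v\}\in E(Q_n)}|P_f(u,v)\cap L_i|.$$

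Next I would evaluate the inner sum for each fixed $i$ using (A2). That condition states that $|P_f(u,v)\cap L_i|$ equals $1$ precisely when $\{u,v\}\in\Theta(n,f^{-1}(l_i))$ and $0$ otherwise, so the inner sum simply counts the edges of $Q_n$ in that edge boundary, i.e. it equals $|\Theta(n,f^{-1}(l_i))|=\theta(n,f^{-1}(l_i))$. Summing over $i$ then yields the claimed identity. Condition (A1) plays the auxiliary but necessary role of making $l_i$ well defined: removing the cut $L_i$ leaves exactly two components, so ``the vertex set of one side'' is meaningful and $f^{-1}(l_i)$ is a genuine subset of $V(Q_n)$ to which $\theta$ may be applied.

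I do not expect a serious obstacle; the one point that must be handled carefully is the interpretation of (A2). For a single cut $L_i$ splitting $H$ into two parts with vertex sets $l_i$ and $V(H)\setminus l_i$, any path $P_f(u,v)$ must cross $L_i$ an odd number of times when $f(u)$ and $f(v)$ lie on opposite sides (equivalently $\{u,v\}\in\Theta(n,f^{-1}(l_i))$) and an even number of times otherwise. Hypothesis (A2) upgrades ``odd'' to ``exactly one'' and ``even'' to ``exactly zero'', which is precisely the condition under which the crossing count collapses to the combinatorial edge-boundary size $\theta(n,f^{-1}(l_i))$; verifying it is the real work in each concrete application of the lemma, but it is assumed here. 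With (A2) in force, the argument above is just a Fubini-type rearrangement, so the proof is complete once the three displayed manipulations are justified.
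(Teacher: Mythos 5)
Your argument is correct and is exactly the standard double-counting proof of this congestion-type lemma: the paper itself states the result without proof (citing \cite{Tang2022}), and the cited proof proceeds by the same Fubini-style rearrangement, using the partition of $E(H)$ to split each path length and condition (A2) to collapse each inner sum to $\theta(n,f^{-1}(l_i))$. Your remark on why (A2) is the substantive hypothesis (upgrading ``odd/even crossings'' to ``one/zero'') is an accurate reading of where the real work lies in applications.
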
	

\noindent\textbf{Notation.} 
$N_n=\{1,2,\cdots,n\}$, and $F_i^{n}=\{i,i+1,\cdots,i+2^{n-1}-1\}, \quad 1\le i \le 2^{n-1}$. 

\noindent\textbf{Notation.} 
Let $(i,j)$ denote a vertice in row $i$ and column $j$ of  cylinder $C_{2^{n_1}}\times P_{2^{n_2}}$, where $1\le i \le 2^{n_1}$ and $1\le j \le 2^{n_2}$.
Then $V(C_{2^{n_1}}\times P_{2^{n_2}})=N_{2^{n_1}}\times N_{2^{n_2}}.$
It is seen that the vertex sets 
$F_i^{n_1}\times N_{2^{n_2}}=\{(x_1,x_2):x_1\in F_i^{n_1},\ x_2\in N_{2^{n_2}} \}$ and
$N_{2^{n_1}}\times N_{j}=\{(x_1,x_2):  x_1\in N_{2^{n_1}},\ x_2\in N_{j} \}$ are equivalent to 
$A_i$ and $B_j$ defined in \cite{Tang2022} , respectively.
See Fig.\ref{label_1} and Fig.\ref{label_2} for  examples.

\begin{figure}[htbp]
	\centering
	\includegraphics[width=5in]{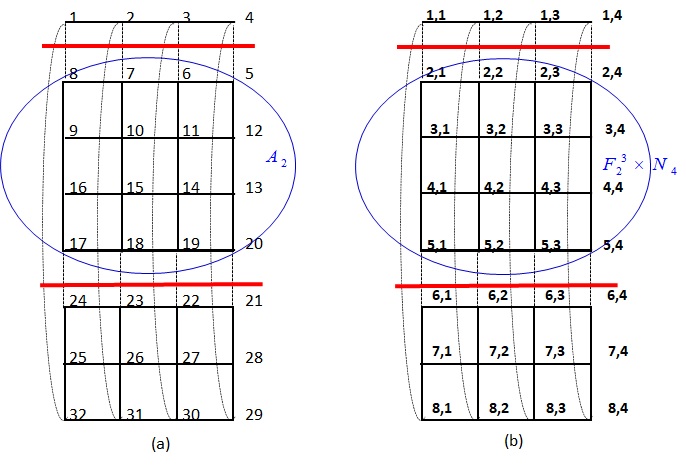}
	\caption{$A_2$ and $F_2^{3}\times N_{2^{2}}$ in cylinder $C_{2^{3}}\times P_{2^{2}}$ }
	\label{label_1}
\end{figure}

\begin{figure}[htbp]
	\centering
	\includegraphics[width=5in]{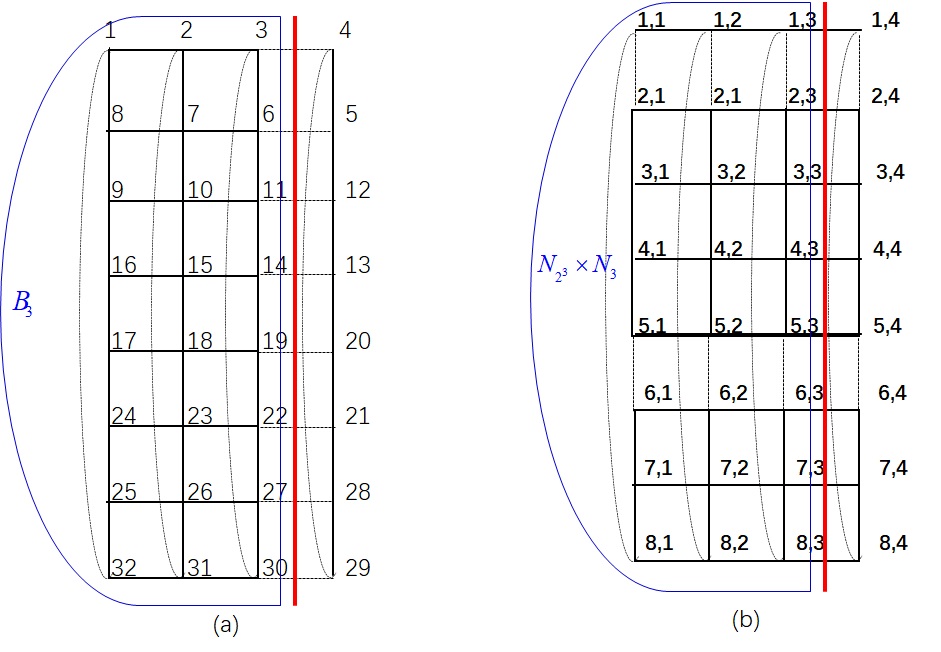}
	\caption{$B_3$ and $N_{2^3}\times N_{3}$ in cylinder $C_{2^{3}}\times P_{2^{2}}$}
	\label{label_2}
\end{figure}

Now we generalize  Gray code map $\xi_n:\{0,1\}^n\rightarrow \{1,2,\cdots,2^n\}$ defined in \cite{LT2021,Tang2022}. 
Define $k$-order Gray code map $\xi_{n_1\ldots n_k}$ corresponding to $k$ components.

\begin{defn}
$k$-order Gray code map	$\xi_{n_1\ldots n_k}$ is given by $\xi_{n_1\ldots n_k}:\{0,1\}^n\rightarrow N_{2^{n_1}}\times\cdots \times N_{2^{n_k}}$,\ i.e.,
	$$\xi_{n_1\ldots n_k}(v)=\xi_{n_1\ldots n_k}(v_1\ldots v_k)=(\xi_{n_1}(v_1),\ldots,\xi_{n_k}(v_k)),$$
	where $n_1+\ldots+n_k=n$, and $v=v_1\ldots v_k\in \{0,1\}^n, v_i\in \{0,1\}^{n_i}, 1\le i \le k$.
\end{defn}
For example, $\xi_{32}(11011)=(\xi_{3}(110),\xi_{2}(11))=(5,3)$.

According to the rule of Gray code map, we have that
\begin{equation*}\label{change}
	\xi_{n_1n_2}^{-1}(F_i^{n_1}\times N_{2^{n_2}})=\xi_n^{-1}(A_i),\ \ \xi_{n_1n_2}^{-1}(N_{2^{n_1}}\times N_{j})=\xi_n^{-1}(B_j).
\end{equation*}

Together with (12) and (13) in \cite{Tang2022}, we have that
\begin{subequations}\label{cpwl1+2}
	\begin{eqnarray}
		&&\sum_{i=1}^{2^{n_1-1}} \theta(n,\xi_{n_1n_2}^{-1}(F_i^{n_1}\times N_{2^{n_2}}))=
		2^{n-n_1}(3\cdot 2^{2n_1-3}-2^{n_1-1}).\\
		&&\sum_{j=1}^{2^{n_2}-1}\theta(n,\xi_{n_1n_2}^{-1}(N_{2^{n_1}}\times N_{j}))
		=2^{n-n_2}(2^{2n_2-1}-2^{n_2-1}).
	\end{eqnarray}
\end{subequations}

Let $f: \{0,1\}^n\rightarrow N_{2^{n_1}}\times N_{2^{n_2}}$ be an
embedding of $Q_n$ into $C_{2^{n_1}}\times P_{2^{n_2}}$. 
Theorems 5.2 and 5.1 in \cite{Tang2022} is rewritten as 
\begin{subequations}\label{cp1+2}
	\begin{eqnarray}
		&&\sum_{i=1}^{2^{n_1-1}} \theta(n,f^{-1}(F_i^{n_1}\times N_{2^{n_2}}))\ge
		\sum_{i=1}^{2^{n_1-1}} \theta(n,\xi_{n_1n_2}^{-1}(F_i^{n_1}\times N_{2^{n_2}})).\\
		&&\sum_{j=1}^{2^{n_2}-1}\theta(n,f^{-1}(N_{2^{n_1}}\times N_{j}))\ge
		\sum_{j=1}^{2^{n_2}-1} \theta(n,\xi_{n_1n_2}^{-1}(N_{2^{n_1}}\times N_{j})).
	\end{eqnarray}
\end{subequations}

 Cylinder $C_{2^{n_1}}\times P_{2^{n_2}}$ can also be observed as $P_{2^{n_2}}\times C_{2^{n_1}}$.
 Let $f: \{0,1\}^n\rightarrow N_{2^{n_2}}\times N_{2^{n_1}}$ be an
 embedding of $Q_n$ into $P_{2^{n_2}}\times C_{2^{n_1}}$, then \eqref{cp1+2}
is rewritten as
\begin{subequations}\label{change:cp1+2}
	\begin{eqnarray}
		&&\sum_{i=1}^{2^{n_1-1}} \theta(n,f^{-1}(N_{2^{n_2}}\times F_i^{n_1}))\ge
		\sum_{i=1}^{2^{n_1-1}} \theta(n,\xi_{n_2n_1}^{-1}(N_{2^{n_2}}\times F_i^{n_1})).\\
		&&\sum_{j=1}^{2^{n_2}-1}\theta(n,f^{-1}(N_j\times N_{2^{n_1}}))\ge
		\sum_{j=1}^{2^{n_2}-1} \theta(n,\xi_{n_2n_1}^{-1}(N_j\times N_{2^{n_1}})).
	\end{eqnarray}
\end{subequations}

\begin{rem}
	It is seen  that
$\xi_{n_1n_2}^{-1}(F_i^{n_1}\times N_{2^{n_2}}) =\xi_{n_1}^{-1}(F_i^{n_1})\times V(Q_{{n_2}})$. Then, by Lemma \ref{swap}, we get that $\theta(n,\xi_{n_1n_2}^{-1}(F_i^{n_1}\times N_{2^{n_2}}))=\theta(n,\xi_{n_2n_1}^{-1}(N_{2^{n_2}}\times F_i^{n_1}))$.
\end{rem}

\section{hypbercubes into torus}\label{sec:torus}

In this section, we prove Theorem \ref{ccthm} in the following procedures.

\noindent$\bullet$ \textbf{Labeling.}\ \ 
Let a  binary tuple set denote the vertex set of torus $C_{2^{n_1}}\times C_{2^{n_2}}$ , that is
$$V(C_{2^{n_1}}\times C_{2^{n_2}})=
\{x=(x_1,x_2): 1\le x_i\le 2^{n_i},\ i=1,2\}=N_{2^{n_1}}\times N_{2^{n_2}}.$$ 
The edge set 
$E(C_{2^{n_1}}\times C_{2^{n_2}})$ is composed of $\mathscr{E}_1$ and $\cup \mathscr{E}_2$, 
where
$$\begin{array}{rcl}
	\mathscr{E}_1&=&\{\{(x_1,x_2),(x_1',x_2)\}:\{x_1,x_1'\}\in E(C_{2^{n_1}}), x_2\in  N_{2^{n_2}}\},\\
	\mathscr{E}_2&=&\{\{(x_1,x_2),(x_1,x_2')\}: x_1\in  N_{2^{n_1}}, \{x_2,x_2'\}\in E(C_{2^{n_2}})\}.
\end{array}$$

\noindent$\bullet$ \textbf{Partition.}\ \ Construct a partition of the edge set of torus.

\textbf{Step 1.}\ 
%There are $2^{n_2}$ cycles in cylinder $C_{2^{n_1}}\times C_{2^{n_2}}$.
%Construct a partition $(\mathscr{X}_{ij})_{j=1}^{2^{n_i-1}}$ of the edge set of  cycle $C_{2^{n_i}}$, $i=1,2$. 
For  each $i=1,2$, $j=1,\ldots,2^{n_i-1}$,  
let $\mathscr{X}_{ij}$ be an edge cut of the cycle $C_{2^{n_i}}$ such that  $\mathscr{X}_{ij}$ disconnects $C_{2^{n_i}}$ into two components where the induced vertex set
is $F_j^{n_i}$.

\textbf{Step 2.}\
 For $i=1,2$, denote
\begin{equation*}
	\mathscr{P}_{ij}=\bigcup_{\{x_i,x_i'\} \in \mathscr{X}_{ij}}\{\{x,x'\}\in \mathscr{E}_i\},
\end{equation*}
then $\{\mathscr{P}_{ij}:1\le i \le 2, 1\le j \le 2^{n_i-1}\}$ is the partition of $E(C_{2^{n_1}}\times C_{2^{n_2}})$.

\noindent$\bullet$ \textbf{Computation.}\ \
 Notice that for each $i,j$, 
 $\mathscr{P}_{ij}$ is an edge cut of the  torus $C_{2^{n_1}}\times C_{2^{n_2}}$.
 $\mathscr{P}_{1j}$ disconnects the torus into two components where the induced vertex set is $F_j^{n_1}\times N_{2^{n_2}}$, and
 $\mathscr{P}_{2j}$ induces vertex set $N_{2^{n_1}}\times F_j^{n_2}$.
 See Fig.\ref{label_3} for an example.

\begin{figure}[htbp]
	\centering
	\includegraphics[width=5in]{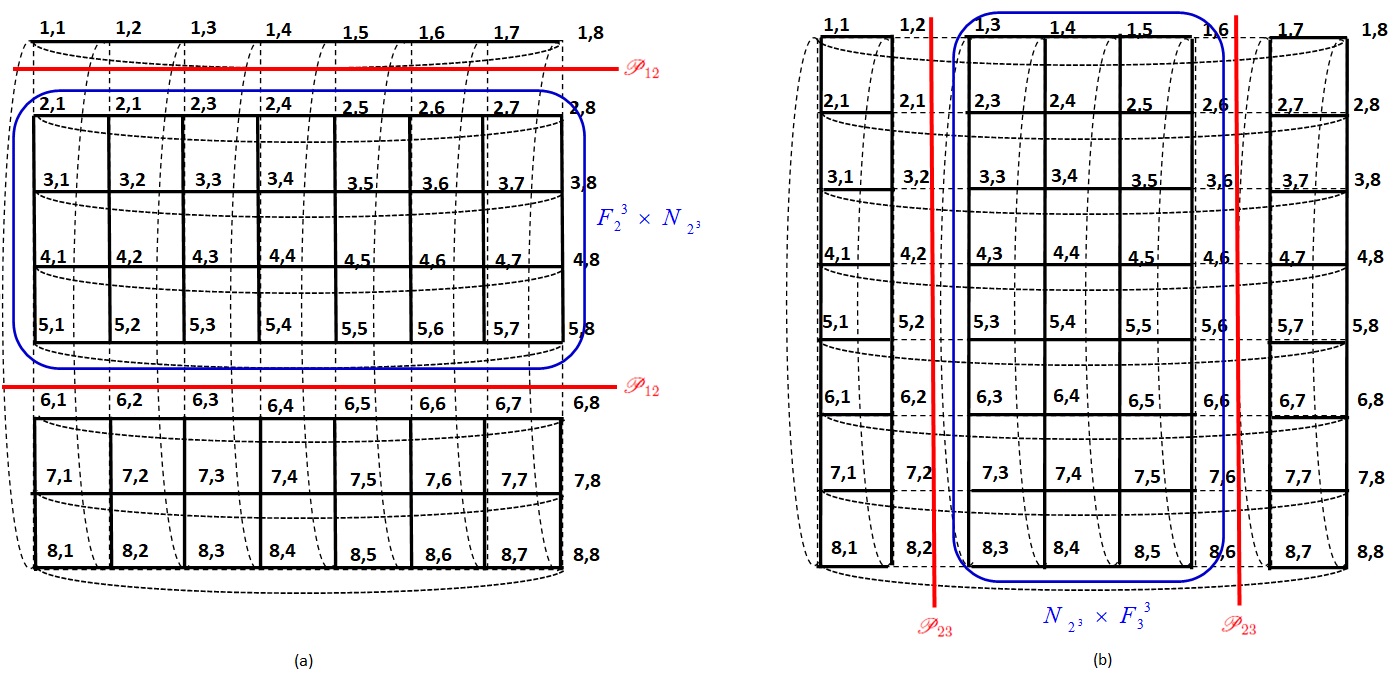}
	\caption{(a) Edge cut $\mathscr{P}_{12}$ disconnects $C_{2^3}\times C_{2^3}$ into two components,where the induced vertex set is $F_2^{3}\times N_{2^{3}}$.
		(b) Edge cut $\mathscr{P}_{23}$ disconnects $C_{2^3}\times C_{2^3}$ into two components,where the induced vertex set is $N_{2^{3}}\times F_3^{3}$.
	}
	\label{label_3}
\end{figure}

 Let $f: \{0,1\}^n\rightarrow N_{2^{n_1}}\times N_{2^{n_2}}$ be an
 embedding of $Q_n$ into $C_{2^{n_1}}\times C_{2^{n_2}}$.
 Under the partition $\{\mathscr{P}_{ij}:1\le i \le 2, 1\le j \le 2^{n_i-1}\}$  and Lemma \ref{wl},
 the wirelength is written as a summation related to function $\theta$, i.e.,
 \begin{equation}\label{ccsum}
 WL(Q_n,C_{2^{n_1}}\times C_{2^{n_2}};f)=
 \sum_{j=1}^{2^{n_1-1}}\theta(n,f^{-1}(F_j^{n_1}\times N_{2^{n_2}}))+
 \sum_{j=1}^{2^{n_2-1}}\theta(n,f^{-1}(N_{2^{n_1}}\times F_j^{n_2})).
 \end{equation}

According to Lemma \ref{swap} and (\ref{cpwl1+2}a), we have that
\begin{equation}\label{cc1}
	\sum_{j=1}^{2^{n_2-1}} \theta(n,\xi_{n_1n_2}^{-1}(N_{2^{n_1}}\times F_j^{n_2}))=
	2^{n-n_2}(3\cdot 2^{2n_2-3}-2^{n_2-1}).
\end{equation}

According to Lemma \ref{swap} and (\ref{change:cp1+2}a), we have that
\begin{equation}\label{cc2}
	\sum_{j=1}^{2^{n_2-1}} \theta(n,f^{-1}(N_{2^{n_1}}\times F_j^{n_2}))\ge
	\sum_{j=1}^{2^{n_2-1}} \theta(n,\xi_{n_1n_2}^{-1}(N_{2^{n_1}}\times F_j^{n_2})).
\end{equation}
%Together with \eqref{ccsum},(\ref{cpwl1+2}a),\eqref{cc1}, (\ref{cp1+2}a),and \eqref{cc2},
Combining above three fomulas and (\ref{cpwl1+2}a),(\ref{cp1+2}a),
 Theorem \ref{ccthm} holds.

\section{hypercubes into Cartesian product of paths and/or cycles}
\label{sec:cartesian}

In this section, we prove Theorem \ref{carthm} in three parts.
The first part follows the analogous process as Section \ref{sec:torus}.
Then we obtain the wirelength under Gray code embedding.
In the end, we conclude that  Gray code embedding is an optimal strategy.

\subsection{Compuation of embedding wirelength}\label{sub1}\

\noindent$\bullet$ \textbf{Labeling.}\ \ Let $$V(\mathscr{G})=\{x=(x_1,\ldots,x_k):x_i\in N_{2^{n_i}}, 1\le i\le k \}
=N_{2^{n_1}}\times \cdots \times N_{2^{n_k}}$$ 
be the vertex set of  Cartesian product $\mathscr{G}$ of $k$ paths and/or cycles.
The edge set $E(\mathscr{G})$ of Cartesian product $\mathscr{G}$ is composed of all edges $\mathscr{E}_i$ correspongding to $k$ paths and/or cycles, denoted by
$E(\mathscr{G})=\bigcup_{i=1}^{k}\mathscr{E}_i$.

\noindent$\bullet$ \textbf{Partition.}\ \ Construct a partition  of the edge set of Cartesian product $\mathscr{G}$.

\textbf{Step 1.}\
For  each $i=1,\ldots,k$, $j=1,\ldots,2^{n_i-1}$, $\mathscr{X}_{ij}$ is  described earlier in Section \ref{sec:torus}.
For  each $i=1,\ldots,k$, $j=1,\ldots,2^{n_i}-1$,  
let $\mathscr{Y}_{ij}$ be an edge cut of the path $P_{2^{n_i}}$ such that  $\mathscr{Y}_{ij}$ disconnects $P_{2^{n_i}}$ into two components where the induced vertex set is $N_j$.

\textbf{Notation.}\ For $1\le i\le k$, let $q_i$ be $2^{n_i-1}$ if  $\mathscr{G}_i=C_{2^{n_i}}$ and $2^{n_i}-1$ otherwise $\mathscr{G}_i=P_{2^{n_i}}$. 
For $j=1,\ldots,q_i$, denote

\begin{equation*}\label{huaF}
	\mathscr{F}_{ij}=\left\{
	\begin{array}{cl}
		\mathscr{X}_{ij},&\mbox{if}\quad \mathscr{G}_i=C_{2^{n_i}},\\
		\mathscr{Y}_{ij},&\mbox{if}\quad \mathscr{G}_i=P_{2^{n_i}}.
	\end{array}
	\right.
\end{equation*}

\textbf{Step 2.}\
For $i=1,\ldots,k$, $j=1,\ldots,q_i$, denote
\begin{equation*}
\mathscr{P}_{ij}=\bigcup_{\{x_i,x_i'\} \in \mathscr{F}_{ij}}\{\{x,x'\}\in \mathscr{E}_i\},
\end{equation*}
then $\{\mathscr{P}_{ij}:1\le i \le k, 1\le j \le q_i\}$ is a partition of $E(\mathscr{G})$.

\noindent$\bullet$ \textbf{Computation.}\ \
 Notice that for each $i,j$, 
$\mathscr{P}_{ij}$ is an edge cut of Cartesian product  $\mathscr{G}$.
Define a vertext set $\mathscr{A}_{ij}$ to be $F_j^{n_i}$  if $\mathscr{G}_i=C_{2^{n_i}}$ and $N_j$ otherwise $\mathscr{G}_i=P_{2^{n_i}}$.

\textbf{Notation.}\
\begin{equation}\label{Bij}
	\begin{array}{cl}
		&\mathscr{B}_{1j}=\mathscr{A}_{1j}\times N_{2^{n_2}}\times \cdots \times N_{2^{n_k}},\ \
		\mathscr{B}_{kj}=N_{2^{n_1}}\times \cdots \times N_{2^{n_{k-1}}}\times \mathscr{A}_{kj},\\
		&\mathscr{B}_{ij}=N_{2^{n_1}}\times \cdots \times \mathscr{A}_{ij}\times \cdots \times N_{2^{n_k}}, \ \ 1<i< k.
	\end{array}
\end{equation}

It is seen that $\mathscr{P}_{ij}$ disconnects $\mathscr{G}$ into two components where the induced vertex set $\mathscr{B}_{ij}$.
Let $f: \{0,1\}^n\rightarrow N_{2^{n_1}}\times \cdots \times N_{2^{n_k}}$ be an embedding of $Q_n$ into $\mathscr{G}$.
Under the partition $\{\mathscr{P}_{ij}:1\le i \le k, 1\le j \le q_i\}$  and Lemma \ref{wl},
the wirelength is written as a summation related to function $\theta$, i.e.,
\begin{equation}\label{wlG}
	WL(Q_n,\mathscr{G};f)=\sum_{i=1}^{k}\sum_{j=1}^{q_i}\theta(n,f^{-1}(\mathscr{B}_{ij})).
\end{equation}

\subsection{The wirelength under Gray code embedding}\label{sub2}\ \

We deal with the wirelength under Gray code embedding in two cases:
one is that $\mathscr{G}_i$ is  cycle $C_{2^{n_i}}$, 
and the other is that $\mathscr{G}_i$ is  path $P_{2^{n_i}}$.
In the following, set $1\le i \le k, 1\le j \le q_i$.

\begin{lem}\label{BWL1}\
If $\mathscr{G}_i$ is  cycle $C_{2^{n_i}}$, then we have that	
\begin{equation*}
	\sum_{j=1}^{q_i}\theta(n,\xi_{n_1\ldots n_k}^{-1}(\mathscr{B}_{ij}))=
	2^{n-n_i}(3\cdot 2^{2n_i-3}-2^{n_i-1}).
\end{equation*}
\end{lem}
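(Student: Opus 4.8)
The plan is to reduce this $k$-component identity to the two-component formula $(\ref{cpwl1+2}a)$, which is already in hand. Put $n'=n-n_i$. The first step is to unwind the definition of the $k$-order Gray code map: since each $\xi_{n_\ell}$ is a bijection onto $N_{2^{n_\ell}}$, the $\xi_{n_1\ldots n_k}$-preimage of a Cartesian product of subsets factors coordinate by coordinate, so from $\mathscr{A}_{ij}=F_j^{n_i}$ (because $\mathscr{G}_i=C_{2^{n_i}}$), the shape of $\mathscr{B}_{ij}$ in \eqref{Bij}, and $\xi_{n_\ell}^{-1}(N_{2^{n_\ell}})=\{0,1\}^{n_\ell}=V(Q_{n_\ell})$ for $\ell\ne i$, we obtain
\[
\xi_{n_1\ldots n_k}^{-1}(\mathscr{B}_{ij})=V(Q_{n_1})\times\cdots\times\xi_{n_i}^{-1}(F_j^{n_i})\times\cdots\times V(Q_{n_k}).
\]

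The second step is to push the $i$-th factor to the front using Lemma \ref{swap}. Writing $m=n_1+\cdots+n_{i-1}$ and regarding $Q_n=Q_m\times Q_{n-m}$ (with the obvious conventions when $i=1$ or $i=k$), the set above equals $V(Q_m)\times\bigl(\xi_{n_i}^{-1}(F_j^{n_i})\times V(Q_{n_{i+1}+\cdots+n_k})\bigr)$, so Lemma \ref{swap} gives
\[
\theta\bigl(n,\xi_{n_1\ldots n_k}^{-1}(\mathscr{B}_{ij})\bigr)=\theta\bigl(n,\xi_{n_i}^{-1}(F_j^{n_i})\times V(Q_{n'})\bigr),
\]
since $V(Q_{n_{i+1}+\cdots+n_k})\times V(Q_m)=V(Q_{n'})$ as vertex sets. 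By the identity recorded in the Remark at the end of Section \ref{Preliminaries}, namely $\xi_{n_1n_2}^{-1}(F_i^{n_1}\times N_{2^{n_2}})=\xi_{n_1}^{-1}(F_i^{n_1})\times V(Q_{n_2})$, applied to the pair $(n_i,n')$, the right-hand side is $\theta(n,\xi_{n_i n'}^{-1}(F_j^{n_i}\times N_{2^{n'}}))$.

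It then remains to sum over $j=1,\ldots,q_i=2^{n_i-1}$ and invoke $(\ref{cpwl1+2}a)$ with $n_1$ replaced by $n_i$ and $n_2$ replaced by $n'$ (so that $n_1+n_2=n$ is preserved), which yields $\sum_{j=1}^{2^{n_i-1}}\theta(n,\xi_{n_i n'}^{-1}(F_j^{n_i}\times N_{2^{n'}}))=2^{n-n_i}(3\cdot2^{2n_i-3}-2^{n_i-1})$, exactly the asserted value. When $k\ge2$ one has $n'=\sum_{\ell\ne i}n_\ell\ge2$, so $(\ref{cpwl1+2}a)$ applies verbatim; the degenerate case $k=1$, where $\mathscr{B}_{1j}=F_j^{n}$ and the claim reads $\sum_{j=1}^{2^{n-1}}\theta(n,\xi_n^{-1}(F_j^{n}))=3\cdot2^{2n-3}-2^{n-1}$, is the hypercube-into-cycle wirelength established in \cite{LT2021}.

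I do not expect a genuine obstacle here: the argument is essentially bookkeeping — the coordinate-wise factorization of $\xi_{n_1\ldots n_k}^{-1}$ and the reduction of the layer $\mathscr{B}_{ij}$ to the two-component layer $F_j^{n_i}\times N_{2^{n'}}$. The only point deserving care is that the block of trailing factors $V(Q_{n_{i+1}})\times\cdots\times V(Q_{n_k})$ is carried correctly through the swap, i.e. that permuting whole coordinate blocks of $Q_n$ leaves $\theta$ invariant; this is a graph automorphism of $Q_n$, which is precisely the mechanism behind the proof of Lemma \ref{swap}. Granting that, the conclusion is a direct substitution into $(\ref{cpwl1+2}a)$.
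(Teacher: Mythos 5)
Your proposal is correct and follows essentially the same route as the paper's proof: factor $\xi_{n_1\ldots n_k}^{-1}(\mathscr{B}_{ij})$ coordinate-wise, use Lemma \ref{swap} to move the $i$-th block to the front so the set becomes $\xi_{n_i}^{-1}(F_j^{n_i})\times V(Q_{n-n_i})$, and then sum over $j$ and apply $(\ref{cpwl1+2}\text{a})$. Your additional remarks (the explicit identification via the Remark at the end of Section \ref{Preliminaries} and the degenerate case $k=1$) only make explicit what the paper leaves implicit.
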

\begin{proof} By the Notation \eqref{Bij}, we have that
	\begin{equation*}
		\begin{array}{rcl}
			\xi_{n_1\ldots n_k}^{-1}(\mathscr{B}_{ij})&=&
			\xi_{n_1\ldots n_k}^{-1}(N_{2^{n_1}}\times \cdots \times F_j^{n_i}\times \ldots \times N_{2^{n_k}})\\
			&=&V(Q_{n_1})\times \ldots\times\xi_{n_i}^{-1}(F_j^{n_i})
			\times\ldots\times V(Q_{n_k})\\
			&=&V(Q_{n_1+\ldots+n_{i-1}})\times
			\xi_{n_i}^{-1}(F_j^{n_i})\times V(Q_{n_{i+1}+\ldots+n_k}).
		\end{array}
	\end{equation*}
Moreover, by Lemma \ref{swap}, we have that 
	\begin{equation*}
	\begin{array}{rcl}
		&&\theta(n,V(Q_{n_1+\ldots+n_{i-1}})\times
		\xi_{n_i}^{-1}(F_j^{n_i})\times V(Q_{n_{i+1}+\ldots+n_k}))\\
		&=&\theta(n,\xi_{n_i}^{-1}(F_j^{n_i})\times 
		V(Q_{n_{i+1}+\ldots+n_k})\times V(Q_{n_1+\ldots+n_{i-1}}))\\
		&=&\theta(n,\xi_{n_i}^{-1}(F_j^{n_i})\times V(Q_{n-n_i})).
	\end{array}
\end{equation*}

Therefore, Lemma \ref{BWL1} follows from (\ref{cpwl1+2}a).
\end{proof}

Similarly, we write the following lemma.
\begin{lem}\label{BWL2}
If $\mathscr{G}_i$ is path $P_{2^{n_i}}$, then we have that
	\begin{equation*}
		\sum_{j=1}^{q_i}\theta(n,\xi_{n_1\ldots n_k}^{-1}(\mathscr{B}_{ij}))=
		2^{n-n_i}(2^{2n_i-1}-2^{n_i-1}).
	\end{equation*}
\end{lem}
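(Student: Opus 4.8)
The plan is to mirror the argument of Lemma \ref{BWL1}, replacing the cycle cut $\mathscr{X}_{ij}$ by the path cut $\mathscr{Y}_{ij}$, so that $\mathscr{A}_{ij}=N_j$ instead of $F_j^{n_i}$. First I would unpack the notation \eqref{Bij}: when $\mathscr{G}_i=P_{2^{n_i}}$ we have
\begin{equation*}
	\mathscr{B}_{ij}=N_{2^{n_1}}\times\cdots\times N_j\times\cdots\times N_{2^{n_k}},
\end{equation*}
and by the product structure of the $k$-order Gray code map $\xi_{n_1\ldots n_k}$ together with the identity $\xi_{n_1n_2}^{-1}(N_{2^{n_1}}\times N_j)=\xi_n^{-1}(B_j)$ recorded just before \eqref{cpwl1+2}, the preimage factors as
\begin{equation*}
	\xi_{n_1\ldots n_k}^{-1}(\mathscr{B}_{ij})=V(Q_{n_1+\ldots+n_{i-1}})\times\xi_{n_i}^{-1}(N_j)\times V(Q_{n_{i+1}+\ldots+n_k}).
\end{equation*}

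Next I would apply Lemma \ref{swap} exactly as in the proof of Lemma \ref{BWL1} to cycle the two surrounding cube factors to the end, obtaining
\begin{equation*}
	\theta\bigl(n,\xi_{n_1\ldots n_k}^{-1}(\mathscr{B}_{ij})\bigr)=\theta\bigl(n,\xi_{n_i}^{-1}(N_j)\times V(Q_{n-n_i})\bigr).
\end{equation*}
Summing over $j=1,\ldots,q_i=2^{n_i}-1$ and recognizing $\xi_{n_i}^{-1}(N_j)\times V(Q_{n-n_i})=\xi_{n_in'}^{-1}(N_j\times N_{2^{n'}})$ with $n'=n-n_i$, the sum becomes $\sum_{j=1}^{2^{n_i}-1}\theta(n,\xi_{n_in'}^{-1}(N_j\times N_{2^{n'}}))$, which is precisely the quantity evaluated in (\ref{cpwl1+2}b) with the roles of $(n_1,n_2)$ played by $(n_i,n-n_i)$; that formula gives the value $2^{n-n_i}(2^{2n_i-1}-2^{n_i-1})$, as claimed.

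The only genuine subtlety — and the step I would state carefully rather than wave at — is the bookkeeping that lets me invoke (\ref{cpwl1+2}b): that identity is stated for a two-factor splitting $n=n_1+n_2$ with the path sitting in the \emph{second} coordinate, whereas here the path factor $P_{2^{n_i}}$ sits in position $i$. This is exactly what Lemma \ref{swap} is for, and the paper has already done the analogous reordering in Lemma \ref{BWL1}, so I would simply note "Similarly" and reuse that chain of equalities verbatim with $F_j^{n_i}$ replaced by $N_j$ and $q_i=2^{n_i}-1$. No new estimates are needed; the lemma is a direct transcription of Lemma \ref{BWL1}'s proof under the path-cut substitution, which is presumably why the authors wrote "Similarly, we write the following lemma."
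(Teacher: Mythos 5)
Your proposal is correct and is essentially the paper's intended argument: the paper omits the proof of Lemma \ref{BWL2} entirely (``Similarly, we write the following lemma''), deferring to the proof of Lemma \ref{BWL1}, and your transcription --- factoring $\xi_{n_1\ldots n_k}^{-1}(\mathscr{B}_{ij})$ into $V(Q_{n_1+\ldots+n_{i-1}})\times\xi_{n_i}^{-1}(N_j)\times V(Q_{n_{i+1}+\ldots+n_k})$, cycling factors with Lemma \ref{swap}, and invoking (\ref{cpwl1+2}b) --- is exactly that analogy carried out. One pedantic note: to match the form $N_{2^{n_1}}\times N_j$ in (\ref{cpwl1+2}b) you want the roles of $(n_1,n_2)$ played by $(n-n_i,\,n_i)$ rather than $(n_i,\,n-n_i)$, but you already flag that this reordering is handled by Lemma \ref{swap}, so nothing is missing.
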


Combining \eqref{wlG}, Lemma \ref{BWL1} and Lemma \ref{BWL2}, we get the wirelength under Gray code embedding of hypercube into Cartesian product of paths and/or cycles. That is
\begin{equation*}\label{wlgray}
	WL(Q_n,\mathscr{G};\xi_{n_1\ldots n_k})=\sum_{i=1}^{k}\mathscr{L}_i,
\end{equation*}
where $\mathscr{L}_i$ is defined in Theorem \ref{carthm}.

\subsection{Minimum wirelength}\label{sub3}\

We show that
Gray code embedding wirelength is the lower bound of wirelength for hypercube into Cartesian product of paths and/or cycles. 
According to \eqref{wlG}, it is sufficient to prove that

\begin{lem}\label{finalieq}
	Let $f: \{0,1\}^n\rightarrow N_{2^{n_1}}\times \cdots \times N_{2^{n_k}}$ be an embedding of $Q_n$ into $\mathscr{G}$, then 
	\begin{equation*}
		\sum_{i=1}^{k}\sum_{j=1}^{q_i}\theta(n,f^{-1}(\mathscr{B}_{ij}))\ge
		\sum_{i=1}^{k}\sum_{j=1}^{q_i}\theta(n,\xi_{n_1\cdots n_k}^{-1}(\mathscr{B}_{ij})).
	\end{equation*}
\end{lem}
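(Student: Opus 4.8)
The plan is to reduce Lemma \ref{finalieq} to the two‑dimensional inequalities \eqref{cp1+2} and \eqref{change:cp1+2} that are already available from \cite{Tang2022}, applied one factor at a time. Fix an index $i$ and look at the inner sum $\sum_{j=1}^{q_i}\theta(n,f^{-1}(\mathscr{B}_{ij}))$. The key observation is that each $\mathscr{B}_{ij}$ is a ``slab'' that cuts only the $i$‑th coordinate: writing $m_i=n_1+\ldots+n_{i-1}$ and $m_i'=n_{i+1}+\ldots+n_k$, we have, after regrouping coordinates, $\mathscr{B}_{ij}=V(Q_{m_i})\times \mathscr{A}_{ij}\times V(Q_{m_i'})$ with $\mathscr{A}_{ij}=F_j^{n_i}$ or $N_j$. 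By Lemma \ref{swap} the quantity $\theta(n,f^{-1}(\mathscr{B}_{ij}))$ is unchanged if we cyclically permute the three blocks of coordinates, so for the purpose of bounding the $i$‑th inner sum we may treat $\mathscr{G}$ as the two‑factor product $(\text{the }i\text{-th factor})\times Q_{n-n_i}$, i.e.\ as a cylinder $C_{2^{n_i}}\times P_{2^{n-n_i}}$ or $P_{2^{n_i}}\times$(something), where the second factor is a hypercube but that is irrelevant to the edge‑cut count.

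The next step is to invoke the appropriate one‑factor inequality. If $\mathscr{G}_i=C_{2^{n_i}}$, then $\sum_{j=1}^{2^{n_i-1}}\theta(n,f^{-1}(\mathscr{B}_{ij}))$ is exactly a sum of the form appearing in \eqref{change:cp1+2}a (the ``$N\times F_j$'' version), hence it is bounded below by the corresponding Gray‑code sum $\sum_{j}\theta(n,\xi_{n_1\ldots n_k}^{-1}(\mathscr{B}_{ij}))$; the identification of the Gray‑code value is exactly the computation carried out in the proof of Lemma \ref{BWL1}. If $\mathscr{G}_i=P_{2^{n_i}}$, then $\sum_{j=1}^{2^{n_i}-1}\theta(n,f^{-1}(\mathscr{B}_{ij}))$ matches \eqref{cp1+2}b (the ``$N\times N_j$'' version), and again is bounded below by the Gray‑code sum, now via Lemma \ref{BWL2}. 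In both cases the crucial point is that the lower bounds in \eqref{cp1+2}--\eqref{change:cp1+2} depend only on the guest graph $Q_n$ and on which coordinate block is being cut, not on the embedding $f$, so summing the $k$ inequalities over $i=1,\ldots,k$ gives
\begin{equation*}
	\sum_{i=1}^{k}\sum_{j=1}^{q_i}\theta(n,f^{-1}(\mathscr{B}_{ij}))\ \ge\ \sum_{i=1}^{k}\sum_{j=1}^{q_i}\theta(n,\xi_{n_1\cdots n_k}^{-1}(\mathscr{B}_{ij})),
\end{equation*}
which is the claim.

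The main obstacle is the bookkeeping needed to make the ``treat it as a two‑factor product'' reduction airtight: one must check that an arbitrary embedding $f:\{0,1\}^n\to N_{2^{n_1}}\times\cdots\times N_{2^{n_k}}$ really does restrict, after collapsing the complementary coordinates into a single $Q_{n-n_i}$ block, to a legitimate embedding of $Q_n$ into $C_{2^{n_i}}\times P_{2^{n-n_i}}$ (resp.\ $P_{2^{n_i}}\times C_{2^{n-n_i}}$), so that the hypotheses of the cited theorems from \cite{Tang2022} are met. This is where Lemma \ref{swap} does the real work, since it is what lets us ignore the internal structure of the $n-n_i$ collapsed coordinates and the order in which the factors are listed. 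Once that is granted, the remaining steps are purely formal: apply the one‑factor bound for each $i$, recognize the right‑hand sides via Lemmas \ref{BWL1} and \ref{BWL2}, and add.
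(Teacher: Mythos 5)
Your proposal is correct and follows essentially the same route as the paper: for each factor $i$ you collapse the complementary coordinates into a single block (the paper does this via an explicit bijection $f_1$ onto $N_{2^{n_i}}\times N_{2^{n-n_i}}$), use Lemma \ref{swap} to reorder the blocks, invoke the two-factor inequalities \eqref{cp1+2}/\eqref{change:cp1+2} from \cite{Tang2022}, identify the right-hand side via the computation in Lemmas \ref{BWL1}--\ref{BWL2}, and sum over $i$. No substantive difference from the paper's argument.
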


\begin{proof}
	To prove this lemma, we only consider  that $i=1$, since a
	similar argument works for the other  $2\le i\le k$.
	
\noindent$\bullet$ \textbf{Case 1.}\ \
	$\mathscr{G}_1=C_{2^{n_1}}$. 
	
For $1\le j \le q_1=2^{n_1-1}$,
$f^{-1}(\mathscr{B}_{1j})=f^{-1}(F_j^{n_1}\times N_{2^{n_2}}\times \cdots \times N_{2^{n_k}})$.
Define a bijective map $f_1$ from $N_{2^{n_1}}\times N_{2^{n_2}}\times \cdots \times N_{2^{n_k}}$ to $N_{2^{n_1}}\times N_{2^{n-n_1}}$, where
	\begin{equation*}\label{f1}
		f_1(x_1,x_2,\cdots,x_k)=(x_1, x_k+\sum_{i=2}^{k-1}(x_i-1)2^{\sum_{j=i+1}^{k}n_j}).
	\end{equation*}
It is clear that $f_1(F_j^{n_1}\times N_{2^{n_2}}\times \cdots \times N_{2^{n_k}})=F_j^{n_1}\times N_{2^{n-n_1}}$.
Moreover, we have that
\begin{equation*}
	f^{-1}(\mathscr{B}_{1j})=f^{-1}\circ f_1^{-1}(F_j^{n_1}\times N_{2^{n-n_1}})=
	(f_1\circ f)^{-1}(F_j^{n_1}\times N_{2^{n-n_1}}).
\end{equation*}
Notice that $f_1\circ f$ is an arbitrary map from $\{0,1\}^n$ to $N_{2^{n_1}}\times N_{2^{n-n_1}}$, then, by (\ref{cp1+2}a), we have that
$\sum_{i=1}^{2^{n_1}-1} \theta(n,f^{-1}(\mathscr{B}_{1j}))\ge
\sum_{i=1}^{2^{n_1}-1} \theta(n,\xi_{n_1}^{-1}(F_j^{n_1})\times V(Q_{n-n_1}))$. Therefore, we conclude that
\begin{equation}\label{B1p}
\sum_{i=1}^{2^{n_1}-1} \theta(n,f^{-1}(\mathscr{B}_{1j}))\ge
	\sum_{i=1}^{2^{n_1}-1} \theta(n,\xi_{n_1\cdots n_k}^{-1}(\mathscr{B}_{1j})).
\end{equation}

\noindent$\bullet$ \textbf{Case 2.}\ \
$\mathscr{G}_1=P_{2^{n_1}}$. 
By a similar analysis, we also get \eqref{B1p}.

Combining \textbf{Case 1} and \textbf{Case 2}, the case for $i=1$ is proved. Thus the lemma holds.
\end{proof}

\noindent \textbf{Proof of Theorem \ref{carthm}.}  Theorem \ref{carthm} follows from Subsection \ref{sub1} to \ref{sub3}.

\noindent\textbf{Acknowledgements}  
The author is grateful to Prof. Qinghui Liu for his thorough review and suggestions.
%The author also thanks the anonymous referees for useful comments.
This work is supported by the National Natural Science Foundation of China, No.11871098.

%{\color{red}$\mathscr{P}_{23}$}\ \ {\color{red}$\mathscr{P}_{12}$}


\begin{thebibliography}{90}	
	
%definition of embedding
\bibitem{BCHRS1998}
S. L. Bezrukov, J. D. Chavez, L. H. Harper, M. R\"ottger, U. P. Schroeder, {Embedding of hypercubes into grids}, In: Brim L., Gruska J., Zlatuka J. (eds) Mathematical Foundations of Computer Science 1998, Lecture Notes in Computer Science, vol 1450, Springer, Berlin, Heidelberg,
693–701.
	
\bibitem{AS2015}
M. Arockiaraj, A. J. Shalini, Conjectures on wirelength of hypercube into cylinder and torus, Theoretical Computer Science, 595(2015), 168–171.
	
\bibitem{ALDS2021}
Micheal Arockiaraj, Jia-Bao Liu, J. Nancy Delaila and Arul Jeya Shalini, {On the optimal layout of balanced complete multipartite graphs into grids and tree related structures},
Discrete Applied Mathematics, 288(2021), 50–65.	

%abstract
\bibitem{PM2011}
P. Manuel, M. Arockiaraj, I. Rajasingh, B. Rajan,  {Embedding hypercubes into cylinders,snakes and caterpillars for minimizing wirelength},
Discrete Applied Mathematics, 159(2011), 2109–2116.

%NP
\bibitem{HS2011}
T. Hoefler and M. Snir, {Generic topology mapping strategies for large-scale parallel architectures}, ICS ’11: Proceedings of the international conference on Supercomputing,  May(2011), 75-84.
http://doi.acm.org/10.1145/1995896.1995909.

%since study
\bibitem{H1964}
L. H. Harper, Optimal assignments of numbers to vertices, J. SIAM 12:1 (1964), 131–135.	

\bibitem{Bernstein1967}%%
A. J. Bernstein,  {Maximality connected arrays on the $n$-cube}, SIAM J.Appl. Math., 15:6(1967), 1485–1489.

%embedding work%%%%%%%%%%%%\cite{E1991,OS2000,DWNS04,FJ2007,LSAD2021}
\bibitem{E1991}
J. A. Ellis, {Embedding rectangular grids into square grids}. IEEE Trans. Comput., 40:1(1991), 46–52.

\bibitem{OS2000}%
D. Sotteau and J. Opatrny,  
{Embeddings of complete binary trees into grids and extended grids with total vertex-congestion 1}, 
Discrete Applied Mathematics, 98(2000), 237-254. 

\bibitem{DWNS04}
B. Doina, W. B. Wolfgang, B. Natasa, L. Shahram, 
{An optimal embedding of honeycomb networks into hypercubes}, 
Parallel Processing Letters, 14(2004), 367-375.

\bibitem{FJ2007}
J. Fan J and X. Jia, {Embedding meshes into crossed cubes}, Information Sciences, 177:15(2007), 3151–3160.

\bibitem{LSAD2021}%ref
Jia-Bao Liu, Arul Jeya Shalini, Micheal Arockiaraj,and J. Nancy Delaila, Characterization of the Congestion Lemma on
Layout Computation, Journal of Mathematics, Article ID 2984703(2021), https://doi.org/10.1155/2021/2984703.

%hypercube is important
\bibitem{Xu2001}
Junming Xu, {Topological Structure and Analysis of Interconnection Networks}, Kluwer Academic Publishers, 2001, pp 105.

%hypercube work%%%%%%%%%%%%%\cite{Chen1988,PM2009,PM2011,RARM2012}

\bibitem{Chen1988}
B. Chen, {On embedding rectangular grids in hypercubes}, IEEE Trans. Comput., 37:10(1988), 1285–1288.

\bibitem{PM2009}
P. Manuel, I. Rajasingh, B. Rajan and H. Mercy, 
{Exact wirelength of hypercube on a grid},
Discrete Applied Mathematics, 157:7(2009), 1486--1495.

%\bibitem{PM2011}

\bibitem{RARM2012}%ref
Indra Rajasingh, Micheal Arockiaraj, Bharati Rajan and Paul Manuel, 
{Minimum wirelength of hypercubes into $n-$dimensional grid networks}, Information Processing Letters, 112(2012), 583-586.

%discuss
\bibitem{RRPR2014}
R.S.Rajan, I. Rajasingh, N.Parthiban,  T.M. Rajalaxmi, 
A linear time algorithm for embedding hypercube into cylinder and torus, Theoretical Computer Science, 542(2014),108–115.

\bibitem{LT2021}
Qinghui Liu and Zhiyi Tang, {A rigorous proof on circular wirelength for hypercubes}, Acta Mathematica Scientia,  43B:2(2023), 919–941.

\bibitem{Tang2022}
Zhiyi Tang, Optimal embedding of hypercube into cylinder,
Theoretical Computer Science, 923(2022), 327–336.

%EIP
\bibitem{H2004}
L. H. Harper, {Global methods for isoperimetric problems}, Cambridge university press, 2004, Chapter 1.
	
%label a number:{PM2009,PM2011,RRPR2014,AS2015,LT2021,Tang2022}

%hypbercube notation
\bibitem{RR2022}
C. Rashtchian, W. Raynaud, {Edge isoperimetric inequalities for powers of the hypercube}, 2022, latest arXiv:1909.10435.	
	

%\bibitem{JQ2015}
%Weixing Ji, Qinghui Liu, Guizhen Wang and Zhuojia Shen, {Embedding of hypercube into cylinder}, 2015, arXiv:1511.07932v1.

%\bibitem{CYWC2004}%ref
%Jou-Ming Chang, Jinn-Shyong Yang, Yue-Li Wang and Yuwen Cheng, Panconnectivity, fault-tolerant hamiltonicity and hamiltonian-connectivity in alternating group graphs, Networks, 44(4)(2004),302-310.

%\bibitem{FLJL2005}%ref
%Jianxi Fan, Xiaola Lin, Xiaohua Jia and Rynson W. H. Lau, Edge-Pancyclicity of Twisted Cubes, ISAAC2005, Lecture Notes in Computer Science, vol 3827.



\end{thebibliography}
\end{document}